\newtheorem{theorem}{Theorem}[section]
\newtheorem{lemma}[theorem]{Lemma}
\newtheorem{corollary}[theorem]{Corollary}
\theoremstyle{definition}
\newtheorem{definition}[theorem]{Definition}
\newtheorem{remark}[theorem]{Remark}
\numberwithin{equation}{section}
\DeclareMathOperator*{\Gal}{Gal}
\DeclareMathOperator*{\degree}{degree}
\newcommand{\CC}{\mathbb{C}}
\newcommand{\PP}{\mathbb{P}}
\newcommand{\GG}{\mathbb{G}}
\begin{document}

\title{Genuinely ramified maps and monodromy}

\author[I. Biswas]{Indranil Biswas}

\address{Department of Mathematics, Shiv Nadar University, NH91, Tehsil
Dadri, Greater Noida, Uttar Pradesh 201314, India}

\email{indranil.biswas@snu.edu.in, indranil29@gmail.com}

\author[M. Kumar]{Manish Kumar}

\address{Statistics and Mathematics Unit, Indian Statistical Institute,
Bangalore 560059, India}

\email{manish@isibang.ac.in}

\author[A.J. Parameswaran]{A. J. Parameswaran}

\address{School of Mathematics, Tata Institute of Fundamental
Research, Homi Bhabha Road, Bombay 400005, India}

\email{param@math.tifr.res.in}

\subjclass[2010]{14H30, 14G17, 14H60}

\keywords{Genuine ramification, fundamental group, monodromy, Morse map}

\begin{abstract}
For any genuinely ramified morphism $f\, :\, Y\, \longrightarrow\, X$ between
irreducible smooth projective curves we prove that $\overline{(Y\times_X Y) \setminus \Delta}$
is connected, where $\Delta\, \subset\, Y\times_X Y$ is the diagonal. Using this result the following are proved:
\begin{enumerate}
\item If $f$ is further Morse then the Galois closure is the symmetric
group $S_d$, where $d\,=\, \text{degree}(f)$.

\item The Galois group of the general projection, to a line, of any smooth curve $X\,\subset\, \PP^n$ of degree $d$,
which is not contained in a hyperplane and contains a non-flex point, is $S_d$.
\end{enumerate}
\end{abstract}

\maketitle

\section{Introduction}

Let $f\,:\,Y\,\longrightarrow\, X$ be a generically smooth finite morphism of smooth projective 
curves over an algebraically closed field $k$. It is called genuinely ramified if the 
corresponding homomorphism of \'etale fundamental groups in surjective.
A map $Y\,\longrightarrow\, X$ is genuinely ramified if and only if $Y\times_X Y$ is connected
\cite{BP}. We prove the following improved version of it (see Theorem \ref{main}):

\textit{For any genuinely ramified morphism $Y\,\longrightarrow\, X$,
the closure of $$(Y\times_X Y) \setminus \Delta\, \subset\, Y\times_X Y$$ is connected,
where $\Delta$ is the diagonal in $Y\times_X Y$.}

Two applications of it are given.

\textit{If a genuinely ramified morphism $f\, :\, Y\, \longrightarrow\, X$ of degree $d$ is also Morse 
(see Section \ref{sec2} for Morse map), then the Galois group of the Galois closure of $f$ is 
$S_d$, where $d\,=\, \text{degree}(f)$.} (See Corollary \ref{S_d-cover}.)

\textit{Let $X\,\subset\, \PP^n$ be a smooth curve of degree $d$ and not contained in any hyperplane. Assume that
$X$ contains a non-flex point (see Definition \ref{non-flex}). Take a line $L$ 
in the dual projective space $\check{\PP}^n$ such that it intersects the dual variety $\check{X}$ transversely, every $H\,\in\, L\cap \check{X}$ is an ordinary tangent to $X$, and $X\cap H_1\cap H_2$ is empty for $H_1\,\ne\, H_2\,\in\, L$. Let
$f\,:\,X\,\longrightarrow\, L$ be the projection of $X$ from the codimension two linear subspace in
$\PP^n$ corresponding to $L$. Then the map $f$ is Morse and its Galois
closure has Galois group $S_d$.} (See Corollary \ref{cor-l}.)

Note that the later result (Corollary \ref{cor-l}) over the field of complex numbers is known and follows from the uniform position 
principle (see \cite{PS}). Again over $\CC$, the monodromy group of a degree $d$ cover $f\, :\, Y\, \longrightarrow\, \PP^1$ has been 
studied by many people (see \cite[Theorem 2]{GM}, \cite[Theorem 1 and 1']{Yo}, etc). Apart from extending some of these results to 
positive characteristic, our method allows computation of monodromy groups even when the base curve is not $\PP^1$.

\section{Morse map}\label{sec2}

Let $k$ be an algebraically closed field. Take connected smooth projective curves $X$ and $Y$ defined over $k$. A generically smooth
finite morphism $f\,:\,Y\,\longrightarrow\, X$ is called a finite covering (or a cover). Since here $X$ and $Y$
are integral schemes, the condition that the morphism $f$ is generically smooth is equivalent to $f$ being separable.
The cover $f$ is called Galois if $k(Y)/k(X)$ is a Galois extension. The cover $f$ is called \textit{Morse} if over every branch point
$x\, \in\, X$ the following two conditions hold: there is exactly one ramification point $y$ over $x$ and this $y$ is a double point.

Take a generically smooth finite morphism $f\,:\,Y\,\longrightarrow\, X$. Consider the fiber product $Y\times_X Y$. For $i\,=\,1,\, 2$,
let
\begin{equation}\label{g1}
q_i\, :\, Y\times_X Y\, \longrightarrow\, Y
\end{equation}
be the natural projection to the $i$-th factor. Consider the diagonal
\begin{equation}\label{g2}
\Delta \,\subset \, Y\times_X Y;
\end{equation}
so $\Delta$ is the image of the map $Y\, \longrightarrow\, Y\times_X Y$ given by the two copies of the identity map of
$Y$. Let
\begin{equation}\label{g0}
Y'\,\, :=\,\, \overline{(Y\times_X Y) \setminus \Delta}\,\, \subset\,\, Y\times_X Y
\end{equation}
be the closure of the complement $(Y\times_X Y) \setminus \Delta$ in $Y\times_X Y$.

\begin{lemma}\label{morse}
Let $f\,:\,Y\,\longrightarrow\, X$ be a Morse map. Then $Y'$ in \eqref{g0} is smooth. If $Y'$ is also connected, then the
restriction $q'_1\, :=\, q_1\big\vert_{Y'} \, :\, Y'\,\longrightarrow\, Y$ of $q_1$ (see \eqref{g1}) is again a Morse map.
\end{lemma}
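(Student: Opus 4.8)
The plan is to reduce everything to a local analysis of $Y\times_X Y$ near the points lying over the branch locus of $f$, exploiting that the Morse hypothesis forces every ramification index of $f$ to be $1$ or $2$ and that over each branch point there is a single ramification point. Fix a point $(y_1,y_2)$ with $f(y_1)=f(y_2)=x$, choose a local coordinate $t$ at $x$ and local coordinates $s_1,s_2$ at $y_1,y_2$ so that $f^*t=s_i^{e_i}$ with $e_i\in\{1,2\}$. Then $Y\times_X Y$ is locally $\mathrm{Spec}\,k[s_1,s_2]/(s_1^{e_1}-s_2^{e_2})$, which is smooth whenever $\min(e_1,e_2)=1$ and is an ordinary node precisely when $e_1=e_2=2$. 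Since two ramification points of a Morse map over the same branch point must coincide, the only singular points of $Y\times_X Y$ are the nodes at $(y,y)$ with $y$ a ramification point, where $s_1^2=s_2^2$ splits into the two branches $\{s_1=s_2\}$ (the diagonal) and $\{s_1=-s_2\}$. Removing $\Delta$ and taking the closure discards the diagonal branch at each such node, so near $(y,y)$ the curve $Y'$ is the single smooth branch $\{s_1=-s_2\}$; at every other point $Y\times_X Y$ is already smooth, while the smooth diagonal points over unramified fibres are isolated sheets not meeting the closure. This yields the smoothness of $Y'$, independently of connectedness.

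Assuming now that $Y'$ is connected, it is an irreducible smooth projective curve and $q_1'$ is finite and separable (being étale at the generic off-diagonal points), hence a cover in the sense of Section~\ref{sec2}; one checks $\deg q_1'=d-1$, since $q_1$ has degree $d$ and restricts to an isomorphism on $\Delta$. To see that $q_1'$ is Morse I would read off the ramification of $q_1(s_1,s_2)=s_1$ from the same local models. At an off-diagonal point with $e_1=e_2=1$, and along the branch $\{s_1=-s_2\}$ at a node, the map $q_1'$ is a local isomorphism; at a point $(y,z)$ with $y$ ramified and $z$ unramified it is again an isomorphism; the only ramification occurs at a point $(z,y)$ with $z$ unramified and $y$ the ramification point over $f(z)$, where $q_1'$ reads $s_2\mapsto s_1=s_2^2$ and so is a double point. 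The Morse condition then assembles: the branch points of $q_1'$ are exactly the unramified points $z$ of $f$ lying over branch points of $f$, and over each such $z$ the fibre contains the single ramification point $(z,y)$, with $y$ the unique ramification point of $f$ over $f(z)$, a double point, no other point of the fibre being ramified.

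The step I expect to require the most care is the analysis at the diagonal nodes $(y,y)$: one must verify both that passing to $\overline{(Y\times_X Y)\setminus\Delta}$ genuinely resolves the node into a single smooth branch, and that, contrary to what the index-$2$ behaviour of $q_1$ at the underlying point might suggest, the restriction $q_1'$ is \emph{unramified} there, the ramification of $q_1'$ having migrated to the mixed points $(z,y)$. Correctly matching the local coordinate on $Y'$ with that on the target $Y$ at each of these points is exactly what makes the ``one double ramification point over each branch point'' count come out right.
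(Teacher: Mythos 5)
Your proof is correct and follows essentially the same route as the paper's: a case-by-case local analysis of $Y\times_X Y$ at points $(y_1,y_2)$ according to the ramification of $f$ at each coordinate, recognizing the diagonal double points as nodes whose residual branch is smooth, and locating the ramification of $q'_1$ at the mixed points $(z,y)$ with $z$ unramified and $y$ the double point. The only caveat is that your normal form $f^*t=s_i^{e_i}$ (and hence the splitting $s_1^2-s_2^2=(s_1-s_2)(s_1+s_2)$ into two distinct branches) presumes tame ramification, so in characteristic $2$ one should instead write the local equation as $\phi(s_1)-\phi(s_2)=(s_1-s_2)\,\psi(s_1,s_2)$ and use separability of $f$ to see that $\psi$ has nonzero linear part and is not divisible by $s_1-s_2$, which is all the argument actually needs.
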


\begin{proof}
Take any point 
$(y_1,\,y_2)\,\in\, Y'$, and denote the point $f(y_1)\,=\,f(y_2)\, \in\, X$ by $x$. If $y_1\,\ne\, y_2$, then $f$ is
\'etale at either $y_1$ or $y_2$. In that case, $(y_1,\, y_2)$ is a smooth point of $Y\times_X Y$, because
the map $q_i$ in \eqref{g1} is smooth at $(y_1,\, y_2)$ if $f$ is \'etale at $y_i$. Hence 
$(y_1,\,y_2)$ is a smooth point of $Y'$. If $y_1\,=\,y_2$, and $f$ is unramified at $y_1$, then again $(y_1,\, y_2)$ is a smooth
point of $Y\times_X Y$ for the same reason, hence it cannot be a point of $Y'$.

Assume that $y_1\,=\,y_2$, and that $f$ is ramified at $y_1$. Then by hypothesis $y_1$ is a double point of $f$. Note that $Y\times_X Y$ 
has two smooth local branches at $(y_1,\,y_1)$ with $\Delta$ and $Y'$ being the irreducible components through this point. Therefore, we 
conclude that $Y'$ is smooth at $(y_1,\, y_1)$. So $Y'$ is smooth.

For $y\, \in\, Y$, if $f(y)$ is not a branch point of $f$, then $(y,\, z)$ is not a branch point of $q'_1$ for every
$z\, \in\, f^{-1}(f(y))$.
Assume that $f(y)$ is a branch point of $f$. Let $z\,\in\, f^{-1}(f(y))$ be the unique double point of $f$ in $f^{-1}(f(y))$.
Then $(y,\, z)$ is the unique double point of $q'_1$ in $(q'_1)^{-1}(y)$ if $y\, \neq\, z$ proving that the map $q'_1$ is Morse. 
Note that if $y\,=\, z$, then again $y$ is not a branch point of $q'_1$.
\end{proof}

\begin{lemma}\label{2-transitive}
Let $f\,:\,Y\,\longrightarrow\, X$ be a cover of degree $d$ between irreducible smooth projective curves. If the complement
$(Y\times_X Y) \setminus \Delta$, where $\Delta\, \subset\, Y\times_X Y$ is the diagonal, is irreducible, then the Galois group of the
Galois closure of $f$ is a two-transitive subgroup of $S_d$. Conversely, if the Galois group of the Galois closure of $f$ is a
two-transitive subgroup of $S_d$, then $(Y\times_X Y) \setminus \Delta$ is irreducible.
\end{lemma}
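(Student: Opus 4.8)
The plan is to translate the geometric statement about irreducible components of $Y\times_X Y$ into the group theory of the monodromy action, and then invoke the standard characterization of $2$-transitivity. Let $M$ denote the Galois closure of the separable extension $k(Y)/k(X)$, set $G\,=\,\Gal(M/k(X))$ and $H\,=\,\Gal(M/k(Y))\,\subset\, G$, so that $[G:H]\,=\,d$. Then $G$ acts transitively on the set $G/H$ of $d$ left cosets, and this permutation action is precisely the monodromy representation $G\,\hookrightarrow\, S_d$ whose image is, by definition, the Galois group of the Galois closure of $f$ realized as a subgroup of $S_d$. Being a $2$-transitive subgroup of $S_d$ is then exactly the assertion that the $G$-action on $G/H$ is $2$-transitive.

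First I would establish a bijection between the irreducible components of $Y\times_X Y$ and the $H$-orbits on $G/H$ (equivalently, the double cosets $H\backslash G/H$). Since both projections $q_1,\,q_2$ in \eqref{g1} are finite, every irreducible component of $Y\times_X Y$ dominates $X$; hence the number of components equals the number of irreducible components of the generic fiber $\mathrm{Spec}\big(k(Y)\otimes_{k(X)}k(Y)\big)$ over the generic point of $X$. As $k(Y)/k(X)$ is separable, the algebra $k(Y)\otimes_{k(X)}k(Y)$ is \'etale over $k(X)$, so it is a product of fields, and the standard description of such a tensor product yields one field factor for each double coset in $H\backslash G/H$, equivalently one for each $H$-orbit on $G/H$. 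Because $k$ is algebraically closed, irreducible and geometrically irreducible coincide, so no arithmetic subtleties arise. Under this bijection the diagonal $\Delta$ of \eqref{g2}, which is an irreducible component isomorphic to $Y$, corresponds to the trivial double coset $H$, i.e. to the fixed point $eH$ of the $H$-action on $G/H$.

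It then follows that the non-diagonal irreducible components of $Y\times_X Y$ are in bijection with the $H$-orbits on $(G/H)\setminus\{eH\}$. Now $(Y\times_X Y)\setminus\Delta$ is irreducible if and only if there is exactly one such non-diagonal component: if there were two or more components $C_i$, then each $C_i\setminus\Delta$ would be a nonempty open dense subset of $C_i$ contained in the complement, forcing reducibility, whereas if there is exactly one component $C$ then $(Y\times_X Y)\setminus\Delta$ equals the open subset $C\setminus(C\cap\Delta)$ of the irreducible curve $C$, hence is irreducible. Thus $(Y\times_X Y)\setminus\Delta$ is irreducible if and only if $H$ acts transitively on $(G/H)\setminus\{eH\}$.

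Finally I would invoke the elementary group-theoretic fact that a transitive action of $G$ on $G/H$ is $2$-transitive if and only if the point stabilizer $H$ acts transitively on the complement $(G/H)\setminus\{eH\}$ of the fixed coset. Combining this with the previous paragraph yields the desired equivalence in both directions simultaneously: $(Y\times_X Y)\setminus\Delta$ is irreducible if and only if $G$ acts $2$-transitively on $G/H$, that is, if and only if the Galois group of the Galois closure of $f$ is a $2$-transitive subgroup of $S_d$. The step requiring the most care is the component-counting bijection of the second paragraph --- specifically, justifying that all components dominate $X$ so that the count may be carried out on the \'etale generic fiber, and matching the diagonal with the trivial double coset; the remaining steps are formal.
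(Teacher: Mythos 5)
Your proposal is correct and follows essentially the same route as the paper: both reduce the component count of $Y\times_X Y$ to the generic fiber $\mathrm{Spec}\big(k(Y)\otimes_{k(X)}k(Y)\big)$ and then identify irreducibility of the non-diagonal part with transitivity of the point stabilizer $H$ on the remaining cosets, which is the standard criterion for $2$-transitivity. The paper phrases the $H$-orbit/double-coset decomposition as the factorization $F(z)=(z-\alpha)F_1(z)$ over $k(Y)$ and the irreducibility of $F_1$, but this is the same argument in different notation.
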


\begin{proof}
Since $f$ is a covering of degree $d$, the function field $k(Y)$ of $Y$ satisfies
the condition
$$
k(Y)\,\,=\,\,k(X)[z]/F(z)
$$
for some irreducible separable polynomial $F$ 
of degree $d$. As the map $f$ is finite, every irreducible component of $Y\times_X Y$ dominates $X$. Hence the irreducible components of 
$Y\times_X Y$ are in bijection with the generic fiber of $Y\times_X Y\,\longrightarrow\, X$. The generic fiber corresponds to the ring extension
$k(X)\,\subset \, k(Y)\otimes_{k(X)}k(Y)$. But $$k(Y)\otimes_{k(X)}k(Y)\,\cong\, k(Y)[z]/(F(z)).$$
Let $\alpha$ be a root of $F(z)$ in $k(Y)$. Then $F(z)\,=\,(z-\alpha)F_1(z)$ in $k(Y)[z]$.

Note that $F_1(z)$ is irreducible if and only if $k(Y)[z]/(F(z))$ has only two closed points. The condition that $k(Y)[z]/(F(z))$
has only two closed points is evidently equivalent to the condition that $\Delta\,\subset\, Y\times_X Y$ as well as the closure of
$(Y\times_X Y) \setminus \Delta$ in $Y\times_X Y$ are both irreducible.
 
Let $\widetilde Y$ be the Galois closure of $f$. Note that the Galois group $G\,=\, \text{Gal}(\widetilde{Y}/X)$
acts on the roots of $F(z)$ in 
$k(\widetilde{Y})$. Since $F(z)$ is an irreducible separable polynomial of degree $d$, the above Galois
group $G$ is a transitive subgroup of $S_d$. Let $\beta$ and 
$\gamma$ be roots of $F_1(z)$. If $F_1(z)$ is irreducible, then there is an automorphism $\sigma$ of $k(\widetilde{Y})$ fixing $k(X)(\alpha)$ such 
that $\sigma(\beta)\,=\,\gamma$. Hence $G$ is two transitive.

Conversely if $G$ is two transitive, then there is a $\sigma\,\in \,G$ such that 
$\sigma(\alpha)\,=\,\alpha$ and $\sigma(\beta)\,=\,\gamma$. Hence $\sigma\,\in\, \Gal(k(\widetilde{Y})/k(X)(\alpha))$. Since
$\beta$ and $\gamma$ were arbitrary roots, it follows that $F_1(z)$ is irreducible.
\end{proof}

\section{Genuinely ramified morphisms}

A covering map $f\, :\, Y\, \longrightarrow\, X$ between connected smooth projective curves is
called \textit{genuinely ramified} if any (all) of the following three equivalent conditions holds:
\begin{enumerate}
\item the induced homomorphism of \'etale fundamental groups $f_*\ :\, \pi_1^{\rm et}(Y)\, \longrightarrow\, \pi_1^{\rm et}(X)$
is surjective.

\item The fiber product $Y\times_X Y$ is connected.

\item $\dim H^0(Y, \, f^*f_*{\mathcal O}_Y)\,=\, 1$.
\end{enumerate}
(See \cite[p.~12828, Proposition 2.6]{BP} and \cite[p.~12830, Lemma 3.1]{BP}.)

\begin{theorem}\label{main}
Let $f\, :\, Y\, \longrightarrow\, X$ be a genuinely ramified morphism between
irreducible smooth projective curves. Then $Y'$ (defined in \eqref{g0}) is connected.
\end{theorem}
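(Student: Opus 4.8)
The plan is to reduce the assertion to a statement about idempotents and then to an explicit monodromy computation. First I would record the sheaf-theoretic framework. Since $Y\times_X Y$ is the preimage of the diagonal $\Delta_X\subset X\times X$ under $f\times f$, it is a Cartier divisor in the smooth surface $Y\times Y$; hence it is Cohen--Macaulay, of pure dimension one, with no embedded points. Consequently $\Delta$ (one irreducible component, isomorphic to $Y$) and $Y'$ (the closure of the union of the remaining components) satisfy $\mathcal{I}_\Delta\cap\mathcal{I}_{Y'}=0$, giving the Mayer--Vietoris sequence
$$0\longrightarrow\mathcal{O}_{Y\times_X Y}\longrightarrow\mathcal{O}_\Delta\oplus\mathcal{O}_{Y'}\longrightarrow\mathcal{O}_{\Delta\cap Y'}\longrightarrow 0,$$
in which $\Delta\cap Y'$ is a finite scheme supported on the ramification points of $f$. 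Flat base change gives $q_{1*}\mathcal{O}_{Y\times_X Y}=f^*f_*\mathcal{O}_Y$, and restriction to the diagonal splits off the unit, so that $f^*f_*\mathcal{O}_Y\cong\mathcal{O}_Y\oplus f^*E$, where $E=f_*\mathcal{O}_Y/\mathcal{O}_X$ is the locally free quotient by the saturated unit. Genuine ramification, in the form $\dim H^0(f^*f_*\mathcal{O}_Y)=1$, is exactly the vanishing $H^0(Y,f^*E)=0$.

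Next I would push the displayed sequence forward along $q_1$ and cancel the two copies of $\mathcal{O}_Y$ coming from $\Delta$, obtaining
$$0\longrightarrow f^*E\longrightarrow q'_{1*}\mathcal{O}_{Y'}\longrightarrow\mathcal{O}_{\Delta\cap Y'}\longrightarrow 0.$$
Taking global sections and using $H^0(f^*E)=0$ yields an injection of $k$-algebras $H^0(Y',\mathcal{O}_{Y'})\hookrightarrow H^0(\Delta\cap Y',\mathcal{O}_{\Delta\cap Y'})$. Since $Y'$ is a proper $k$-scheme, its connected components correspond to the primitive idempotents of $H^0(Y',\mathcal{O}_{Y'})$; thus it suffices to show this algebra has no nontrivial idempotent. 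The target is a product $\prod_{p}\mathcal{O}_{\Delta\cap Y',p}$ of local Artinian rings indexed by the ramification points, so the problem becomes combinatorial: an idempotent is a choice of a subset of the ramification points, and I must rule out every proper nonempty such choice.

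The local input comes next. Working henselian-locally at a point of $Y\times_X Y$ lying over a branch point, the equation has the shape $s_1^{e_1}=(\text{unit})\,s_2^{e_2}$; away from the branch locus each $q_i$ is smooth, so $Y\times_X Y$ is smooth there and distinct components are disjoint, whereas over the branch locus all local branches meet at the point. Hence all branches of $Y'$ through a fixed ramification point lie in a single connected component of $Y'$, and the only gluings of components of $Y'$ occur over the branch locus. This shows that $\Delta\cap Y'$ is (tamely) the ramification divisor and that the partition of ramification points induced by an idempotent is compatible with these local cliques.

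The main obstacle is the final global step: promoting connectedness of $Y\times_X Y$ to connectedness of $Y'$, equivalently showing that $\Delta$ is not a cut vertex of the dual graph of $Y\times_X Y$. The abstract vanishing $H^0(f^*E)=0$ alone does not suffice, because a nontrivial idempotent restricts to a nonzero, non-constant idempotent on $\Delta\cap Y'$ and so is invisible to it; indeed, a purely topological version of the statement (three lines through two distinct points of $\Delta$) is false, so the specific structure of the self-fiber-product must enter. I would handle this by passing to the Galois closure $\widetilde Y$, with group $G=\Gal(\widetilde Y/X)$ and $F=G/H$, so that the irreducible components of $Y\times_X Y$ are the $G$-orbits on $F\times F$ and two components are glued over a branch point exactly when they meet a common cell $C\times C'$ of cycles of the corresponding inertia generator. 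Connectedness of $Y\times_X Y$ is connectedness of the resulting graph on all orbits, and connectedness of $Y'$ is connectedness of the induced subgraph on the off-diagonal orbits. The key points to exploit are that the diagonal orbit is attached to an off-diagonal orbit only through a diagonal cell $C\times C$ with $|C|\ge 2$ --- at which all off-diagonal branches meet, by the local analysis --- and that conjugates of inertia are again inertia; using transitivity of $G$ on $F$ one transports any diagonal-mediated link to an off-diagonal chain, so that removing the diagonal orbit cannot disconnect the graph. Verifying this transport cleanly, uniformly in the characteristic (so as to include wild ramification), is the crux of the argument.
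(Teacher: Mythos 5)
Your sheaf-theoretic reduction (Mayer--Vietoris, $f^*f_*\mathcal{O}_Y\cong \mathcal{O}_Y\oplus f^*E$ with $H^0(Y,f^*E)=0$, hence $H^0(Y',\mathcal{O}_{Y'})\hookrightarrow H^0(\Delta\cap Y',\mathcal{O}_{\Delta\cap Y'})$) is correct but, as you yourself observe, it cannot close the argument. The proof therefore rests entirely on your final combinatorial step, and that step is not actually carried out: you describe the components of $Y\times_X Y$ as $G$-orbits on $F\times F$ and assert that ``using transitivity of $G$ on $F$ one transports any diagonal-mediated link to an off-diagonal chain,'' then concede that verifying this transport ``is the crux of the argument.'' This is exactly the content of the theorem, so what you have is a reduction plus an unproved claim. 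Worse, the proposed mechanism is doubtful as stated: $G$ acts transitively on $F$ but \emph{not} on the set of $G$-orbits in $F\times F$ (equivalently on the double cosets $H\backslash G/H$ indexing the components of $Y\times_X Y$), and the diagonal orbit is $G$-stable, so there is no group element moving $\Delta$ to an off-diagonal component; a ``transport'' of a link through $\Delta$ to a chain avoiding $\Delta$ does not follow from transitivity on $F$ alone.

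The missing ideas, which are where the actual work lies, are the following. First, an elementary graph lemma: in any finite connected graph, a vertex $v$ realizing the diameter (i.e.\ at maximal distance from some $w$) is not a cut vertex, since a geodesic from $w$ to any $w'$ cannot pass through $v$. Second, a situation in which a group acts \emph{transitively on the vertices} of the dual graph, so that ``some vertex is not a cut vertex'' upgrades to ``no vertex is a cut vertex,'' in particular the diagonal one. In the Galois case the components of $Y\times_X Y$ form a $\Gamma$-torsor and this applies directly. In the non-Galois case one cannot work with $Y\times_X Y$ itself (no transitive action on components) nor naively with $\widetilde Y\times_X\widetilde Y$ (the Galois closure need not be genuinely ramified, so that fiber product may be disconnected); the paper interposes the maximal \'etale intermediate cover $Z$ of $\widetilde f$, shows $\widetilde Y\to Z$ and $W=Z\times_XY\to Z$ are genuinely ramified, identifies $\Phi^{-1}(\Delta)=D_{W/Z}$ by a degree count, and then applies the two ideas above to the quotient of the dual graph of $\widetilde Y\times_Z\widetilde Y$ by the $H$-cosets. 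None of this apparatus appears in your proposal, and without it (or a genuine substitute) the argument does not go through; note also that this route is uniform in the characteristic and requires no tameness hypothesis, whereas your local model $s_1^{e_1}=(\text{unit})s_2^{e_2}$ with its count of branches is a tame picture.
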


\begin{proof} We will show this by separating the cases when $f$ is Galois and when it is not. 

{\bf Case 1}:\, Assume that the map $f$ is Galois. Let

$$
\Gamma\, :=\, \text{Aut}(Y/X) \, \subset\, \text{Aut}(Y)
$$
be the Galois group of $f$. For each $\gamma\, \in\, \Gamma$,
\begin{equation}\label{e2}
Y_\gamma\,\, :=\,\, \{(y,\, \gamma. y) \,\, \big\vert\,\, y\, \in\, Y\}\,\,\subset\,\, Y\times_X Y
\end{equation}
is evidently an irreducible component isomorphic to $Y$. It is straightforward to check that these are all the irreducible
components of $Y\times_XY$. In other words, the irreducible components of $Y\times_XY$ are parametrized by $\Gamma$.

Let $G_f$ denote the dual graph of $Y\times_X Y$ which is constructed as follows. The 
vertices of $G_f$ are the irreducible components of $Y\times_X Y$, so the vertices of $G_f$ are indexed by the elements
of $\Gamma$. For any two vertices $v_1$ and $v_2$ of $G_f$ there is at most one edge joining them, and there is an
edge joining $v_1$ and $v_2$ if and only if the irreducible components of $Y\times_X Y$ corresponding to $v_1$ and $v_2$
intersect. It is easy to see that the Galois group $\Gamma$ acts naturally on $G_f$. The action of $\Gamma$ on the vertices of $G_f$
coincides with the left-translation action of $\Gamma$ on itself (recall that the vertices are parametrized by the
elements of $\Gamma$). In particular, the action of $\Gamma$ on the vertices of $G_f$ is free and
transitive.

Since $Y\times_X Y$ is connected, so is $G_f$. The diagonal $\Delta$ in \eqref{g2} is $Y_e$ (see \eqref{e2}), where
$e\,\in\, \Gamma$ is the identity element. For any element $g\, \in\, \Gamma$, let $G^g_f$
be the graph obtained from $G_f$ by deleting the vertex corresponding
to $g$ together with all the edges that contain this vertex corresponding to $g$.
So the closure $\overline{(Y\times_X Y) \setminus \Delta}$ of $(Y\times_X Y) \setminus \Delta\, \subset\, Y\times_X Y$
is connected if and only if the graph $G^e_f$ is connected.

Assume that $Y\times_X Y \setminus \Delta\,=\, Y\times_X Y\setminus Y_e$ is disconnected. As noted above, this implies that
$G^e_f$ is disconnected. Since the action of $\Gamma$ on the vertices of $G_f$ is transitive, we conclude that
the graph $G^g_f$ is disconnected for each $g\,\in\, \Gamma$. 

\textbf{Claim 1.}\, Given a finite connected graph $\mathbb G$, let $v$ and $w$ be two vertices of $\GG$ such that distance between
$v$ and $w$ is the largest 
among all pairs of vertices in $\GG$. Then the graph $\GG_v$ obtained from $\mathbb G$, by deleting $v$ together with also all edges 
containing $v$, is connected.

We will prove the claim by showing that all the vertices of $\GG_v$ are connected to $w$. 
If $w'$ is a vertex of $\GG$ different from $v$, then the hypothesis that the distance between $v$ and $w$ is the largest
implies that there is a path in $\GG$ connecting $w$ and $w'$ which does not pass through $v$. Hence $w'$ and $w$ are
connected in $\GG_v$.

Hence the closure of $(Y\times_X Y) \setminus \Delta\, \subset\, Y\times_X Y$ is connected if $f$ is Galois.

{\bf Case 2.}\, Assume that $f$ is not Galois. 

Let
\begin{equation}\label{wf}
\widetilde{f}\,\,:\,\, \widetilde{Y}\,\,\longrightarrow\,\, X
\end{equation}
be the Galois closure of $f$ with
$h'\,:\,\widetilde{Y}\, \longrightarrow\, Y$ being the map through which $\widetilde f$ factors, meaning
$\widetilde{f}\,=\,f\circ h'$. It is known that $f$ is genuinely ramified does not imply $\widetilde f$ is genuinely ramified. 
Let $g\,:\,Z\,\longrightarrow\, X$ 
be the maximal \'etale cover dominated by $\widetilde f$, and let $h\,:\,\widetilde{Y}\,\longrightarrow\, Z$ be the factor map;
so $\widetilde{f}\,=\,g\circ h$. Let $W$ be the fiber product $Z\times_X Y$, and also
denote by $p_Z$ (respectively, $p_Y$) the projection map from $W$ to $Z$ (respectively, $Y$). Note that since $g:Z\,\longrightarrow\,
X$ is \'etale, the map $p_Y$ is also \'etale. Moreover, since $Y$ is smooth, the fiber product $W$ is also smooth.
Consequently, we get the following commutative diagram:
\begin{equation}\label{d}
\xymatrix{
& \widetilde Y\ar[d]^q\ar@/_1.5pc/[ddl]_h\ar@/^1.5pc/[ddr]^{h'}\\
& W\ar[dl]_{p_Z}\ar[dr]^{p_Y}\\
Z\ar[dr]_g & & Y\ar[dl]^f\\
& X
}
\end{equation}

Since $f$ is genuinely ramified and $g$ is \'etale, we have $k(Z)\cap k(Y)\,=\,k(X)$. Also note that $k(Z)$ is Galois over $k(X)$. This is because the Galois closure of an \'etale cover is \'etale and
$g$ is the maximal intermediate \'etale cover of the ramified Galois cover $\widetilde f$. So $W$ is an irreducible smooth curve and
the maps $q$, $p_Y$, $h'$ and $h$ are all Galois covers. Since $g$ is the maximal \'etale cover dominated by $\widetilde f
\,:\, \widetilde Y \,\longrightarrow\, X$, the map 
$h$ is genuinely ramified. Consequently, $p_Z$ --- being dominated by $h$ --- is also genuinely ramified.

Since $p_Z$ is genuinely ramified, the fiber product $W\times_Z W$ is connected. Let $$D_{W/Z}
\, \subset\, W\times_Z W$$ be the diagonal, so $D_{W/Z}$ is identified with $W$. Consider the map
$$
\Phi\, \,:=\,\, (p_Y\times p_Y)\big\vert_{W\times_Z W}\, :\,\, W\times_Z W \,\longrightarrow\,Y\times_X Y.
$$

\textbf{Claim 2.}\, The inverse image, under this map $\Phi$, of $\Delta\, \subset\, Y\times_X Y$ coincides with $D_{W/Z}$.

It can be seen that to prove the above claim it suffices to prove that the degree of the map $\Phi\,:\,W\times_Z W\,\longrightarrow\, Y\times_X Y$
coincides with the degree of $p_Y\,:\,W\,\longrightarrow\, Y$. Indeed, as $D_{W/Z}$ is a component of $\Phi^{-1}(\Delta)$,
we have $D_{W/Z}\,=\,\Phi^{-1}(\Delta)$ if degrees of $p_Y$ and $\Phi$ coincide. Now note that $W\times_Z W$ is the fiber product
of $Y\times_X Y\,\longrightarrow\, X$ and $g\,:\,Z\,\longrightarrow\, X$ with $\Phi\,:\,W\times_Z W\,\longrightarrow\, Y\times_X Y$
being the first projection. Hence we have $\degree(\Phi)\,=\,\degree(g)\,=\,\degree(p_Y)$. This proves the claim.

{}From Claim 2 it follows that $(Y\times_X Y)\setminus \Delta$ is connected if $W\times_Z W\setminus D_{W/Z}$ is connected (note that $\Phi$ is
surjective).

Let $G$ (respectively, $H$) be the Galois group of $h$ (respectively, $q$); see \eqref{d}.
Then $H\,\le \,G$ is a subgroup (need not be normal). Note
that the preimage of $D_{W/Z}$ under the morphism $\widetilde Y\times_Z\widetilde Y\,\longrightarrow\, W\times_Z W$ is $\widetilde Y
\times_W \widetilde Y$. Hence to show that $(W\times_Z W)\setminus D_{W/Z}$ is connected it is enough to show that $(\widetilde Y
\times_Z\widetilde Y)\setminus \widetilde Y \times_W\widetilde Y$ is connected. Let $\Gamma_G$ (respectively, $\Gamma_H$)
be the dual graph of $\widetilde Y\times_Z\widetilde Y$ (respectively, $\widetilde Y\times_W\widetilde Y$). Then $\Gamma_H$ is
a subgraph of $\Gamma_G$. It will follow that $\widetilde Y\times_Z\widetilde Y\setminus \widetilde Y \times_W\widetilde Y$ is
connected once we are able to show that $\Gamma_G\setminus \Gamma_H$ is connected.

As $h$ is genuinely ramified the graph $\Gamma_G$ is connected. Also $G$ acts faithfully and transitively on $\Gamma_G$. The
cosets of $H$ define a 
partition of $\Gamma_G$ with $\Gamma_H$ as one of the parts. Let $\Gamma$ be the quotient graph of $\Gamma_G$ with respect to this partition. 
Then the $H$-cosets in $G$ are the vertices of $\Gamma$. Since $\Gamma_G$ is connected, so is $\Gamma$. Also, as $G$ acts on the set of 
$H$-cosets transitively, the action of $G$ on $\Gamma$ is transitive. Since $\Gamma$ is connected there exists a vertex $gH$ in $\Gamma$ such that 
$\Gamma\setminus \{gH\}$ is connected (see Claim 1). As $G$ acts transitively on $\Gamma$, it now follows that $\Gamma\setminus\{eH\}$ is
connected. But this implies that $\Gamma_G\setminus \Gamma_H$ is connected.
\end{proof}

\begin{corollary}\label{S_d-cover}
Let $f\,:\,Y\,\longrightarrow\, X$ be a genuinely ramified cover of degree $d$ between irreducible smooth
projective curves such that $f$ is a Morse map. Then the Galois group of the Galois closure of $f$ is full $S_d$.
\end{corollary}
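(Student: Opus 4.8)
The plan is to combine the three results already established—Theorem \ref{main}, Lemma \ref{morse}, and Lemma \ref{2-transitive}—to force the monodromy group to be $2$-transitive, and then to exploit the Morse hypothesis a second time to exhibit a single transposition inside it. Throughout, write $G\,\le\, S_d$ for the Galois group of the Galois closure $\widetilde f\,:\,\widetilde Y\,\longrightarrow\, X$, acting on the $d$ sheets of $f$; since $f$ is a covering of irreducible curves the extension $k(Y)/k(X)$ has degree $d$ with irreducible minimal polynomial, so $G$ is already transitive.

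First I would establish $2$-transitivity. By Lemma \ref{morse} the curve $Y'$ of \eqref{g0} is smooth, and by Theorem \ref{main} it is connected; a smooth connected curve is irreducible, so $Y'$ is irreducible. As $(Y\times_X Y)\setminus\Delta$ is open and dense in its closure $Y'$, it is irreducible as well, and Lemma \ref{2-transitive} then shows that $G$ is a $2$-transitive subgroup of $S_d$.

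Next I would produce a transposition in $G$. Since $f$ is genuinely ramified and $d\,\ge\,2$, it cannot be \'etale (an \'etale genuinely ramified cover is an isomorphism, as then $Y\times_X Y$ would contain the diagonal as an open-and-closed proper component and be disconnected), so $f$ has at least one branch point $x$. Let $I\,\le\, G$ be the inertia subgroup at a point of $\widetilde Y$ lying over $x$. The orbits of $I$ on the $d$ sheets correspond to the points of $Y$ over $x$, the size of each orbit being the ramification index there. The Morse condition forces these indices to be a single $2$ together with $d-2$ ones, so $I$ has exactly one orbit of size $2$ and fixes the remaining $d-2$ sheets; since $I$ acts nontrivially, its image in $S_d$ is precisely the transposition $\tau$ interchanging the two sheets that collide at the double point. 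Thus $\tau\,\in\, G$. Finally, $2$-transitivity upgrades this single transposition to all of $S_d$: given distinct sheets $i\,\ne\, j$, choose $g\,\in\, G$ carrying the support of $\tau$ to $\{i,\,j\}$, so that $g\tau g^{-1}\,=\,(i\ j)\,\in\, G$; as every transposition arises this way and transpositions generate $S_d$, we conclude $G\,=\,S_d$.

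The step that needs the most care is producing the transposition in positive characteristic, where a double point ($e\,=\,2$) may be wildly ramified (when $\mathrm{char}\,k\,=\,2$) and the naive reasoning ``local monodromy is a transposition'' via tameness is unavailable. The point I would emphasize is that the transposition is read off from the \emph{orbit sizes} of the inertia action—equivalently, from the ramification indices recorded by the Morse hypothesis—rather than from the order or internal structure of $I$: one orbit of size $2$ with all other sheets fixed forces the image in $S_d$ to be a transposition irrespective of the characteristic. (I note in passing that the ``$q'_1$ is Morse'' clause of Lemma \ref{morse} would also permit an inductive variant, reducing degree $d$ to the point stabilizer of degree $d-1$; but that route requires knowing $q'_1$ is itself genuinely ramified, which is not obviously available a priori, so the transposition argument above is the cleaner path.)
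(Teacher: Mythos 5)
Your proof is correct and follows essentially the same route as the paper: Lemma \ref{morse} plus Theorem \ref{main} give irreducibility of $(Y\times_X Y)\setminus\Delta$, Lemma \ref{2-transitive} gives $2$-transitivity, the Morse ramification type produces a transposition, and $2$-transitivity plus a transposition yields $S_d$. The only difference is that where the paper cites the cycle lemma of Abhyankar (or Juul) for the transposition and Serre for the last group-theoretic step, you supply direct arguments --- your inertia-orbit reasoning is sound and, as you note, characteristic-free, since the image of the inertia group in $S_d$ is pinned down by its orbit sizes alone.
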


\begin{proof}
Note that $(Y\times_X Y)\setminus \Delta$ is connected by Theorem \ref{main}. Lemma \ref{morse} implies that
$(Y\times_X Y)\setminus \Delta$ is smooth and hence it is irreducible. Lemma \ref{2-transitive} says that the Galois group $G$ of
the Galois closure of $f$ is a two transitive subgroup of $S_d$. Since the ramification type of $f$ is $(2,\,1,\,1,\,
\ldots,\,1)$, the group $G$ contains a transposition (\cite[Cycle lemma, page 95]{abh} or \cite[Lemma 2.4]{juul}). Hence we have $G\,=\,S_d$ \cite[Lemma 4.4.3]{serre}.
\end{proof}

\begin{corollary}
Let $f\,:\,Y\,\longrightarrow\, X$ be a genuinely ramified cover of degree $d$ between irreducible smooth curves such
that $f$ is a Morse map. Then $Y'$ in \eqref{g0} is smooth irreducible, and $$ q'_1\,:\,Y'\,\longrightarrow\, Y$$
(see Lemma \ref{morse} for $q'_1$) is a genuinely ramified Morse map of degree $d-1$. 
\end{corollary}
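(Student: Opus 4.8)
The plan is to read off the first three assertions almost directly from the results already in hand, and to concentrate the real work on the genuine ramification of $q'_1$.

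First, $Y'$ is smooth by Lemma \ref{morse} (as $f$ is Morse) and connected by Theorem \ref{main} (as $f$ is genuinely ramified); a connected smooth variety is irreducible, so $Y'$ is smooth irreducible. Lemma \ref{morse} then yields that $q'_1\,:\,Y'\to Y$ is Morse. For the degree, note that $q_1\,:\,Y\times_X Y\to Y$ is the base change of $f$ along $f$, hence finite of degree $d$, with generic fibre $\text{Spec}\,(k(Y)\otimes_{k(X)}k(Y))\,=\,\text{Spec}\,k(Y)[z]/(F(z))$, where $\deg F=d$ and $F$ is separable; deleting the diagonal, which accounts for the root $\alpha\in k(Y)$, leaves a generic fibre of length $d-1$, so $\degree(q'_1)=d-1$. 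Moreover $q'_1$ is separable, since $k(Y')=k(Y)[z]/(F_1(z))$ with $F_1\mid F$ separable and $F_1$ irreducible (consistent with the irreducibility of $Y'$, cf.\ Lemma \ref{2-transitive}).

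The one substantive point is that $q'_1$ is genuinely ramified, and this is where I expect the effort to go. My plan is to deduce it from the corresponding statement for the Galois closure together with a monotonicity principle. By Corollary \ref{S_d-cover} the Galois closure $\widetilde f\,:\,\widetilde Y\to X$ has $\Gal(\widetilde Y/X)=S_d$, acting on the $d$-element generic fibre of $f$; identifying that fibre with $\{1,\dots,d\}$ so that $Y$ corresponds to the symbol $1$, we get $Y=\widetilde Y^{\,N}$ with $N=\text{Stab}_{S_d}(1)\cong S_{d-1}$ and $Y'=\widetilde Y^{\,M}$ with $M=\text{Stab}_{S_d}(1,2)\cong S_{d-2}$. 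Since $k(Y)\subseteq k(Y')\subseteq k(\widetilde Y)$, this exhibits $\widetilde Y\to Y'\to Y$, so $Y'$ is an intermediate cover. I will invoke the elementary monotonicity that any intermediate cover of a genuinely ramified cover is genuinely ramified: a connected \'etale cover of $Y$ dominated by $Y'$ is a fortiori dominated by $\widetilde Y$. Hence it suffices to prove that the Galois cover $\widetilde Y\to Y$ is genuinely ramified.

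For the latter I will compute inertia. Genuine ramification of the Galois cover $\widetilde Y\to Y$ is equivalent to its maximal \'etale subcover being trivial, that is, to the inertia subgroups of $\widetilde Y/Y$ generating $N$ (as a normal subgroup). Fix a branch point $x$ of $f$, which exists because a genuinely ramified cover of degree $>1$ is ramified. As $f$ is Morse, the ramification type over $x$ is $(2,1,\dots,1)$, so every inertia subgroup of $\widetilde Y/X$ over $x$ is generated by a transposition; as the point of $\widetilde Y$ over $x$ varies, these run through the full conjugacy class, i.e.\ through $\langle(a\,b)\rangle$ for every pair $\{a,b\}$. For the pairs with $1\notin\{a,b\}$ the generating transposition lies in $N=\text{Stab}(1)$, so intersecting with $N$ shows that $\langle(a\,b)\rangle$ with $a,b\in\{2,\dots,d\}$ arises as an inertia subgroup of $\widetilde Y/Y$. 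These transpositions generate $\text{Sym}(\{2,\dots,d\})=N$, so the inertia of $\widetilde Y/Y$ generates $N$, its maximal \'etale subcover is trivial, and $\widetilde Y\to Y$ is genuinely ramified; by the monotonicity step so is $q'_1$.

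The main obstacle is exactly this last translation --- that for a Galois cover genuine ramification means precisely that inertia normally generates the whole group --- together with the verification that already the transpositions fixing the distinguished symbol fill out $S_{d-1}$ (here it is useful that $\Gal(\widetilde Y/X)=S_d$ forces each inertia group over a branch point of $f$ to be literally a transposition subgroup, valid in all characteristics). The degenerate cases $d\le 2$, where $q'_1$ has degree $\le 1$ and the claim is vacuous, should be noted separately. Everything else is bookkeeping layered on top of Theorem \ref{main}, Lemma \ref{morse}, Lemma \ref{2-transitive} and Corollary \ref{S_d-cover}.
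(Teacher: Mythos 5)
Your proof is correct, and for the one substantive step --- genuine ramification of $q'_1$ --- it takes a genuinely different route from the paper. The paper argues as follows: since $\Gal(\widetilde Y/X)\,=\,S_d$ (Corollary \ref{S_d-cover}), the cover $h'\,:\,\widetilde Y\,\longrightarrow\,Y$ is $S_{d-1}$-Galois and dominates $q'_1$, which corresponds to the subgroup $S_{d-2}$; because $S_{d-2}$ is a \emph{maximal} subgroup of $S_{d-1}$, the cover $q'_1$ admits no nontrivial intermediate covers at all, so it is genuinely ramified as soon as it is merely ramified. The paper then checks ramifiedness of $q'_1$ by observing that its Galois closure $h'$ is ramified (over a branch point of $f$ with $d\,\ge\,3$, the points of $f^{-1}(x)$ where $f$ is unramified are branch points of $h'$) and that the Galois closure of an \'etale cover is \'etale. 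You instead prove the stronger statement that $h'\,:\,\widetilde Y\,\longrightarrow\,Y$ is itself genuinely ramified, by computing that the inertia groups of $\widetilde Y/X$ over a branch point of $f$ sweep out the full conjugacy class of transposition subgroups (the Morse orbit type $(2,1,\dots,1)$ forces each inertia group to be exactly $\langle (a\,b)\rangle$, in every characteristic including the wild case $p=2$), intersect with $N\,=\,S_{d-1}$, and note that the transpositions fixing the distinguished symbol normally generate $S_{d-1}$; you then descend to $q'_1$ by the monotonicity $\pi_1(\widetilde Y)\twoheadrightarrow\pi_1(Y)$ factoring through $\pi_1(Y')$. Both arguments lean on $\Gal(\widetilde Y/X)\,=\,S_d$; the paper's is shorter because maximality of $S_{d-2}$ in $S_{d-1}$ lets it get away with proving mere ramifiedness, while yours costs an inertia computation but yields the extra information that $h'$ is genuinely ramified and would adapt to situations where the relevant subgroup is not maximal. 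Your handling of the remaining assertions (smoothness, irreducibility, Morse, degree $d-1$, and the degenerate case $d\,\le\,2$) matches the paper's.
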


\begin{proof}
As in \eqref{wf}, let $\widetilde f\,:\,\widetilde Y\,\longrightarrow\, X$ be the Galois closure of $f$. By
Corollary \ref{S_d-cover}, this $\widetilde f$ is a $S_d$-Galois cover.
Lemma \ref{morse} implies that $Y'$ is smooth and Theorem \ref{main} implies $Y'$ is connected, and hence $Y'$
is irreducible. Consequently, by Lemma \ref{morse}, the covering morphism $q'_1$ is a Morse map of degree $d-1$.

Note that $h'\,:\,\widetilde{Y}\,\longrightarrow\, Y$ in \eqref{d} is a $S_{d-1}$-Galois cover which dominates $q'\,:\,Y'\, \longrightarrow\, 
Y$. This follows by noticing that $\Gal(k(\widetilde{Y})/k(Y))$ is an index $d$ subgroup of $\Gal(k(\widetilde{Y})/k(X))$ and the fact that 
any subgroup of $S_d$ of index $d$ is actually $S_{d-1}$. It is standard fact that there is no proper subgroup of $S_{d-1}$ that properly
contains $S_{d-2}$. This implies that there is no field in between $k(Y)$ and $k(Y')$. Hence $q'_1:Y'\, \longrightarrow\, Y$ does not 
admit any intermediate covers. So if we show that $q'_1$ is ramified then it is genuinely ramified.

Note that if $d\,=\,2$ then $q'_1$ is degree 1 and there is nothing more to show. If $d\,\ge\, 3$, then note that $h'$ is ramified. Indeed, for a 
branch point $x\,\in\, X$ of $f$, every point of $\widetilde{f}^{-1}(x)$ is a ramification point of $\widetilde f$. Hence every point in 
$f^{-1}(x)$ at which $f$ is unramified is a branch point of $h'$. Since $f$ is a Morse map and the degree $f$ is at least 3, it follows that 
$h'$ is a ramified cover. Also note that $h'$ is the Galois closure of $q_1'$ as the degree of $q_1'$ is $d-1$. Hence $q_1'$ is also 
ramified.
\end{proof}

\begin{definition}\label{non-flex}
Let $X\,\subset\, \PP^n$ be a smooth curve of degree $d$ not contained in any hyperplane. A hyperplane $H\,\subset\, \PP^n$
is said to be an \textit{ordinary tangent} of $X$ at $x\, \in\, X$ if $x\, \in\, H$, the multiplicity of the intersection of
$H$ and $X$ at $x$ is two and their intersection is transversal at points of $X\cap H \setminus \{x\}$. We will say a point $x\in X$ is
\textit{non-flex} if there is an hyperplane $H\,\subset\, \PP^n$ which is an ordinary tangent of $X$ at $x$. 
\end{definition}

The dual variety $$\check{X}\,\,:=\,\,\{H\,\in\, \check{\PP}^n\,\,\mid\,\, H\, \text{ does not intersect 
$X$ transversally} \}$$ is a hypersurface in $\check{\PP}^n$.

\begin{lemma} \label{ord-tan}
Let $X\,\subset\, \PP^n$ be a smooth curve of degree $d$ not contained in any hyperplane. Assume that $X$ has a non-flex point.
There is an open dense subset $U$ of $\check{X}$ such that every $H$ in $U$ is an ordinary tangent of $X$.
\end{lemma}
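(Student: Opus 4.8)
The plan is to realize $\check{X}$ as the image of an incidence correspondence and reduce the statement to the openness and non-emptiness of the ordinary-tangent locus. Set
$$\mathcal{I}\,:=\,\{(x,\,H)\in X\times\check{\PP}^n\,\mid\, \mathrm{mult}_x(H\cap X)\geq 2\}\,=\,\{(x,\,H)\,\mid\, H\supseteq T_xX\},$$
the variety of pairs consisting of a point together with a tangent hyperplane there. The first projection $\mathcal{I}\to X$ exhibits $\mathcal{I}$ as a $\PP^{n-2}$-bundle over $X$ (the hyperplanes containing the fixed line $T_xX\subset\PP^n$ form a $\PP^{n-2}$), so $\mathcal{I}$ is smooth and irreducible of dimension $n-1$. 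The second projection $q\,:\,\mathcal{I}\to\check{\PP}^n$ is proper with image $\check{X}$; since $\check{X}$ is a hypersurface of dimension $n-1$, the map $q\,:\,\mathcal{I}\to\check{X}$ is surjective and generically finite, and in particular $\check{X}$ is irreducible. Writing $O\subseteq\check{X}$ for the set of ordinary tangents, it then suffices to prove that $O$ is open and non-empty, since a non-empty open subset of the irreducible variety $\check{X}$ is dense.

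Non-emptiness is precisely the content of the non-flex hypothesis: a non-flex point $x_0\in X$ furnishes, by Definition \ref{non-flex}, a hyperplane $H_0$ that is an ordinary tangent of $X$ at $x_0$, whence $H_0\in O$. This is where the hypothesis is indispensable; in positive characteristic the conormal (Gauss) map may be inseparable and every tangent hyperplane a flex, in which case $O$ would be empty.

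For openness I would stratify $\mathcal{I}$ according to the two ways an ordinary tangent can fail. Let $B_1:=\{(x,\,H)\in\mathcal{I}\,\mid\,\mathrm{mult}_x(H\cap X)\geq 3\}$ be the flex locus, which is closed, being cut out by the vanishing of the appropriate osculating jet; and let $B_2\subseteq\mathcal{I}$ be the multitangent locus, namely the image under the first projection of $(\mathcal{I}\times_{\check{\PP}^n}\mathcal{I})\setminus\Delta$, where $\Delta$ is the diagonal. For $H\in\check{X}$ the fibre $q^{-1}(H)$ is exactly the set of tangency points of $H$, and $H$ fails to be an ordinary tangent if and only if it has at least two tangency points or a single tangency of multiplicity at least three; that is, $\check{X}\setminus O=q(B_1\cup B_2)$. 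Since $q$ is proper, it remains only to check that $B_1\cup B_2$ is closed, for then $O=\check{X}\setminus q(B_1\cup B_2)$ is open, hence dense.

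The main obstacle is the closedness of $B_1\cup B_2$: the set $B_2$ is the image of a merely locally closed set and need not be closed on its own. I would resolve this by showing $\overline{B_2}\subseteq B_1\cup B_2$. A point of $\overline{B_2}$ is a limit of configurations $(x_i,\,x_i',\,H_i)$ with $x_i\neq x_i'$ both tangency points of $H_i$; if the limit keeps the two points distinct it lies in $B_2$, while if they collide to a single point $x_0$ then, by conservation of intersection number in the flat family of degree-$d$ divisors $H\cap X$ on $X$, the intersection multiplicity of $H_0$ with $X$ at $x_0$ is at least $2+2=4$, so $(x_0,\,H_0)\in B_1$. Hence $B_1\cup B_2=B_1\cup\overline{B_2}$ is closed. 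This conservation-of-number step, which is characteristic-independent, is the crux of the argument and completes the proof that $O$ is open, non-empty, and therefore dense in $\check{X}$.
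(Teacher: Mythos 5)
Your proof is correct, and it reaches the conclusion by a genuinely different route from the paper's. Both arguments hinge on an incidence correspondence, but you work with the conormal variety $\mathcal{I}=\{(x,H)\,:\,H\supseteq T_xX\}$, a $\PP^{n-2}$-bundle over $X$, and obtain openness of the ordinary-tangent locus $O$ by writing $\check{X}\setminus O$ as the image, under the proper map $q$, of the closed bad locus $B_1\cup\overline{B_2}$; the one delicate point, which you correctly isolate, is that when the two tangency points of a bitangent configuration collide, conservation of intersection numbers forces multiplicity at least $4$ at the limit point, so the limit lands in the flex locus $B_1$. The paper instead works with the full incidence variety $\mathcal{X}=\{(x,H)\,:\,x\in X\cap H\}$, a degree-$d$ finite cover of $\check{X}$, identifies the transversality locus $\mathcal{W}\subseteq\mathcal{X}$ as the isomorphism locus of a morphism of rank-$n$ vector bundles, and defines $U$ as the set of $H$ whose fibre meets $\mathcal{W}$ in the maximal number $d-2$ of reduced points; openness then comes from semicontinuity of that count, and no degeneration analysis of colliding tangencies is needed. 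Your version has the added merit of making the irreducibility of $\check{X}$ (which is what upgrades ``nonempty open'' to ``dense,'' and which the paper uses implicitly) an explicit consequence of the irreducibility of $\mathcal{I}$. One presentational remark: over a general algebraically closed field the limit argument for $\overline{B_2}\subseteq B_1\cup B_2$ is best phrased scheme-theoretically, by noting that $\overline{B_2}$ is contained in the image of the closure of $(\mathcal{I}\times_{\check{\PP}^n}\mathcal{I})\setminus\Delta$ under the closed first projection, and that the condition $H\cdot X\geq 2x+2x'$ (as divisors on $X$) is closed in $(x,x',H)$ and agrees with membership off the diagonal; restricting to the diagonal then gives multiplicity at least $4$, which is exactly your conservation-of-number step made rigorous.
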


\begin{proof}
The inclusion map $X\, \hookrightarrow\, \PP^n$ will be denoted by $\iota_X$. Let
\begin{equation}\label{f1}
\PP^n\times\check{X}\,\, \supset\,\, {\mathcal H}\,\, \stackrel{\varpi'}{\longrightarrow}\,\, \check{X}
\end{equation}
be the tautological hyperplane bundle over $\check{X}$. Let
\begin{equation}\label{f2}
\phi\,\,:\,\, {\mathcal H}\,\, \longrightarrow\,\, \PP^n
\end{equation}
be the natural projection. Define ${\mathcal X}\,:=\, \phi^{-1}(X)\, \subset\, {\mathcal H}$. The
inclusion map ${\mathcal X}\, \hookrightarrow\, {\mathcal H}$ will be denoted by $\iota_{\mathcal X}$. Let
\begin{equation}\label{f4}
\varphi\, \, :\,\, {\mathcal X}\,\, \longrightarrow\,\, X
\end{equation}
be the natural projection. Note that $\varphi$ is given by the restriction, to $\mathcal X$, of
the map $\phi$ in \eqref{f2}. Define
\begin{equation}\label{f3}
\varpi\,\, :\,\, {\mathcal X}\,\, \longrightarrow\,\, \check{X}
\end{equation}
to be the restriction of $\varpi'$ in \eqref{f1}. Note that $\varpi$ is a finite morphism.

Let $T_{\varpi'}\, \subset\, T{\mathcal H}$ be the relative tangent bundle for the projection $\varpi'$ in \eqref{f1}.
The natural injective homomorphisms
$$
\iota^*_{\mathcal X} T_{\varpi'}\,\longrightarrow\, (\iota_X\circ\varphi)^* T\PP^n
 \ \ \,\text{ and }\ \ \, \varphi^*TX \, \longrightarrow\, (\iota_X\circ\varphi)^* T\PP^n
$$
together produce a homomorphism
$$
\Psi\,\, :\,\, \iota^*_{\mathcal X} T_{\varpi'}\oplus \varphi^*TX \,\, \longrightarrow\,\, (\iota_X\circ\varphi)^* T\PP^n.
$$
From the assumption that $X$ has a non-flex point it follows immediately that $\Psi$ is surjective over some point. Let
$$
{\mathcal W}\,\, \subset\,\, {\mathcal X}
$$
be the nonempty Zariski open subset over which $\Psi$ is an isomorphism. Consider the Zariski open subset
$$
\mathcal{U}\,\, :=\,\, \varpi({\mathcal W})\,\, \subset\,\, \check{X}.
$$

Take any $H_0\,\in\, \check{X}$ which is an ordinary tangent of $X$ at some point $x_0\, \in\, X$ (such an hyperplane exists by our
assumption). Then
$$
\varpi^{-1}(H_0)\cap {\mathcal W}\,\, =\,\, (X\cap H_0)\setminus \{x_0\}.
$$
In particular, $\varpi^{-1}(H_0)\cap {\mathcal W}$ consists of distinct $d-2$ reduced points of $X$. 
Note that $\varpi^{-1}(H_0)\,=\, (\varpi^{-1}(H_0)\cap {\mathcal W}) \cup 2x_0 \, \subset\, X$. Also, note that for any
$H\, \in\, {\mathcal U}$, the inverse image $\varpi^{-1}(H)\cap {\mathcal W}$ is a reduced subscheme of $X$ whose length
is at most $d-2$.

Let $U\,\, \subset\,\, {\mathcal U}$ be the locus of all points $H\, \in\, {\mathcal U}$ such that the length of
$\varpi^{-1}(H)\cap {\mathcal W}$ is $d-2$ (the maximal possible).

The hyperplane in $\PP^n$ corresponding to each point of $U$ is an ordinary tangent of $X$ at some point.
Also, $U$ is an open subset of $\check{X}$.
\end{proof}

Under the hypothesis of the Lemma \ref{ord-tan}, let $L$ be a line in $\check{\PP}^n$ that intersects $\check{X}$ transversely, $L\cap \check{X}$ is contained in $U$ and $X\cap H_1\cap H_2$ is empty for $H_1\,\ne\, H_2\,\in\, L$. Note that a general $L$ in $\check{\PP}^n$ will have this property.
We define a map
\begin{equation}\label{ef}
f\,:\,X\,\longrightarrow\, L
\end{equation}
as follows. For $x\,\in \,X$, let $H_x$ be the set of hyperplanes
in $\PP^n$ passing through $x$. Then
$H_x$ defines a hyperplane in $\check{\PP}^n$ not containing $L$ and hence it intersects $L$ at a unique point. The map $f$ sends $x$ to this point of $L$.

The following is a generalization of \cite{Yo} to any smooth irreducible curve in $\PP^n$ and on the other hand to any characteristic.

\begin{corollary}\label{cor-l}
Let the hypothesis on $X$ be as in Lemma \ref{ord-tan} and $f:X\to L$ be as in \eqref{ef}.
The projection of $X$ from the codimension two linear subspace in $\PP^n$ corresponding to $L$ coincides with the map $f$. The map $f$ is Morse and has Galois group $S_d$.
\end{corollary}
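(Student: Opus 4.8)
<br>

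The plan is to prove Corollary~\ref{cor-l} in three stages: first identify $f$ with the linear projection, then verify the Morse property, and finally invoke Corollary~\ref{S_d-cover} to extract the Galois group.

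First I would establish the geometric identification. The line $L\,\subset\,\check{\PP}^n$ corresponds, by duality, to a codimension two linear subspace $\Lambda\,\subset\,\PP^n$: indeed $L$ is the pencil of hyperplanes containing $\Lambda$, so $\Lambda\,=\,\bigcap_{H\in L}H$. For $x\,\in\,X$, the locus $H_x$ of hyperplanes through $x$ is a hyperplane in $\check{\PP}^n$, and $H_x\cap L$ is the unique $H\,\in\,L$ passing through $x$; since $H\supset\Lambda$ for every $H\,\in\,L$, this point is exactly the image of $x$ under projection away from $\Lambda$ onto the target line. I would spell this out using coordinates to confirm $f$ agrees with $\pi_\Lambda\big\vert_X$, noting that the condition $X\cap H_1\cap H_2\,=\,\emptyset$ for distinct $H_1,H_2\,\in\,L$ guarantees $X\cap\Lambda\,=\,\emptyset$, so the projection is a genuine finite morphism of degree $d\,=\,\deg X$.

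Next I would verify that $f$ is Morse. A point $x\,\in\,X$ is a ramification point of $f$ precisely when the tangent line to $X$ at $x$ meets $\Lambda$, equivalently when the hyperplane $H\,=\,f(x)$ is tangent to $X$ at $x$, i.e.\ $H\,\in\,\check{X}$. The hypotheses are tailored for this: since $L\cap\check{X}\,\subset\,U$, every such tangent hyperplane is an \emph{ordinary} tangent, so the intersection multiplicity of $H$ and $X$ at $x$ is exactly two, giving a simple (double) ramification point. Transversality of $L$ with $\check{X}$ ensures $L$ meets $\check{X}$ in finitely many reduced points, so there are finitely many branch hyperplanes, and the condition $X\cap H_1\cap H_2\,=\,\emptyset$ forces each branch point of $f$ on $L$ to have a \emph{single} ramification point above it with the other $d-2$ sheets unramified and distinct. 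This is exactly the Morse condition from Section~\ref{sec2}: one ramification point over each branch point, and that point a double point. I expect the main obstacle here to be the careful bookkeeping translating the three hypotheses on $L$ into the two defining clauses of ``Morse'': I would argue that an ordinary tangent $H\,\in\,L$ contributes a double point at its point of tangency and transverse (hence unramified) intersection elsewhere, and that distinct branch hyperplanes cannot share a ramification point precisely because $X\cap H_1\cap H_2\,=\,\emptyset$.

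Finally, once $f$ is established as a Morse map, I would check genuine ramification and conclude. Since $X$ is not contained in any hyperplane, the projection $f\,:\,X\,\longrightarrow\,L\,\cong\,\PP^1$ is not a constant and cannot factor through a nontrivial \'etale cover of $\PP^1$ (as $\PP^1$ is simply connected, any cover of $\PP^1$ by a positive-genus or even by $\PP^1$ itself that is \'etale is trivial), so $f$ is automatically genuinely ramified when $d\,\ge\,2$; more directly, $\pi_1^{\mathrm{et}}(L)$ is trivial, so surjectivity of $f_*$ holds vacuously. With $f$ a genuinely ramified Morse cover of degree $d$, Corollary~\ref{S_d-cover} applies verbatim and yields that the Galois closure of $f$ has Galois group $S_d$, completing the proof.
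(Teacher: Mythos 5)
Your proposal is correct and follows essentially the same route as the paper: identify $f$ with the projection from the codimension two subspace dual to $L$, use the hypothesis $L\cap \check{X}\subset U$ (ordinary tangents) to check the Morse condition fiber by fiber, deduce genuine ramification from the simple connectedness of $\PP^1$, and conclude via Corollary \ref{S_d-cover}. The only difference is that you spell out more of the bookkeeping (e.g.\ that $X\cap H_1\cap H_2=\emptyset$ gives $X\cap\Lambda=\emptyset$), which the paper leaves implicit.
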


\begin{proof}
Let $H_1$ and $H_2$ be two distinct points of $L$; then $A\,=\,H_1\cap H_2$ is a codimension two linear subspace of $\PP^n$. The map
$f\,:\, X\,\longrightarrow\, L$ is the projection of $X$ from $A$.

By the description of $f$, if $H\in L$ is not in $\check{X}$ then $H$ intersects $X$ transversally and hence $f^{-1}(H)$ has $d$ distinct points. So $f$ is not branched
at $H$. If $H\in L\cap U$ then $H$ intersects $X$ transversely at $d-2$ distinct points and and $H$ intersects $X$ with multiplicity two at one point (say $x$). 
Hence $f$ is \'etale at all points of $f^{-1}(H)$ except at $x$ and $x$ is a double point of $f$. Hence $f$ is Morse, since $L\cap \check{X}$ is contained in 
$U$.
 
Since $\PP^1$ is simply connected $f$ is also genuinely ramified. Hence by Corollary \ref{S_d-cover}, it follows that the Galois group of $f$ is $S_d$.
\end{proof}

\begin{remark}
 Note that the hypothesis that $X\,\subset\,\PP^2$ contains a non-flex point is equivalent to the dual map from $X$ to $\check{X}$ is separable if the characteristic of the base field is odd (see \cite[Proposition 1.5]{pardini}). 
\end{remark}

\section*{Acknowledgements}

We are very grateful to the referee for comments which helped in removing an error and improving the exposition.

\end{document}